\def\la#1{\hbox to #1pc{\leftarrowfill}}
\def\ra#1{\hbox to #1pc{\rightarrowfill}}
\def\fract#1#2{\raise4pt\hbox{$ #1 \atop #2 $}}
\def\bbS{{S}_{n}}
\def\bbs{{ S_{n}^{\ast}}}
\def\H2{\hbox{{\rm H}\kern-0.9em\hbox{{\rm I}\ \ }}^2}
\def\h{\hbox{{\rm H}\kern-0.9em\hbox{{\rm I}\ \ }}}
\def\T{\hbox{{\rm T}\kern-0.66em\hbox{{\rm I}\ \ }}}
\def\R{\hbox { {\rm R}\kern-0.9em\hbox{{\rm I}\ }}}
\def\C{\hbox { {\rm C}\kern-0.56em\hbox{{\rm I}\ }}}
\theoremstyle{definition}
\theoremstyle{remark}
\numberwithin{equation}{section}
\newtheorem{theorem}{\textbf{Theorem}}[section]
\newtheorem{corollary}[theorem]{\textbf{Corollary}}
\newtheorem{lemma}[theorem]{\textbf{Lemma}}
\begin{document}
\title{On the higher rank numerical range of the shift operator}
\author{Haykel GAAYA}
\address{\ddag.Institute Camille Jordan, Office 107 University of Lyon1,
43 Bd November 11, 1918, 69622-Villeurbanne, France.}
\email{\ddag  gaaya@math.univ-lyon1.fr }
\subjclass[2000]{47A12, 47B35}
\keywords{Operator theory, Numerical radius, Numerical range, higher rank numerical range, Eigenvalues, Toeplitz forms}
\maketitle
\begin{abstract}
For any n-by-n complex matrix T and any $1\leqslant k\leqslant n$, let $\Lambda_{k}(T)$ the set of all $\lambda\in \C$ such that $PTP=\lambda P$ for some rank-k orthogonal projection $P$ be its higher rank-k numerical range. It is shown that if $\bbS$ is the n-dimensional shift on ${\C}^{n}$ then its rank-k numerical range is the circular disc centred in zero and with radius $\cos\dfrac{k\pi}{n+1}$ if $1<k\leqslant\left[\frac{n+1}{2} \right]$ and the empty set if $\left[\frac{n+1}{2} \right]<k\leqslant n$, where $\left[x \right] $ denote the integer part of $x$. This extends and rafines previous results of U. Haagerup, P. de la Harpe \cite{Haagerup} on the classical numerical range of the n-dimensional shift on${\C}^{n}$. An interesting result for higher rank-$k$ numerical range of nilpotent operator is also established.
\end{abstract}

\section{Introduction}
\hspace{0.5cm}Let $\mathcal{H}$ be a complex separable Hilbert space and $\mathcal{B(H)}$ the collection of all bounded linear operator on $\mathcal{H}$. The numerical range of an operators $T$ in $\mathcal{B(H)}$ is the subset $$W(T)=\left\lbrace <Tx,x>\in\C;x\in \mathcal{H} ,\lVert x \lVert\leqslant 1\right\rbrace $$  of the plane, where $<.,.>$ denotes the inner product in $\mathcal{H}$ and the numerical range of $T$ is defined by $$ \omega_{2}(T)=\sup \left\lbrace \lvert z\lvert; z\in W(T) \right\rbrace .$$ 
\hspace{0.5cm}We denote by $S$ the unilateral shift acting on the Hardy space $\H2$ of the square summable analytic functions. 
$$\begin{array}{ccccc}
S & : & \H2 & \to & \H2 \\
& & f & \mapsto & zf(z) \\
\end{array}$$
\hspace{0.5cm}Beurling's theorem implies that the non zero invariant subspaces of $S$ are of the forme $\phi~\H2$, where $\phi$ is some inner function . Let $S(\phi)$ denote the compression of $S$ to the space $ H(\phi)=\H2\ominus\phi~\H2 $ :
$$S(\phi)f(z)=P(zf(z)),$$ where $P$ denotes the ortogonal projection from $\H2$ onto $ H(\phi)$. The space $H(\phi)$ is a finite-dimensional exactly when $\phi$ is a finite Blaschke product. The numerical radius and numerical range of the model operator $S(\phi)$ seems to be important and have many applications. In \cite{Cassier}, Badea and Cassier showed that there is relationship between numerical radius of $S(\phi)$ and Taylor coefficients of positive rational functions on the torus and more recently in \cite{Gaaya}, the author gave an extension of this result. However the evaluation of the numerical radius of $S(\phi)$ under an explicit form is always an open problem. The reader may consult \cite{Gaaya} for an estimate of $S(\phi)$ where $\phi$ is a finite Blashke product with unique zero.
In the particular case where $\phi(z)=z^{n}$, $S(\phi)$ is unitarily equivalent to $\bbS$ where 

$$
\bbS=\left(
\begin{array}{cccc}
0 &       &       &  \\
1 & \ddots &      &   \\
  & \ddots &\ddots    &  \\
  &        & 1    & 0 
  
\end{array}
\right)
.$$
\hspace{0.5cm}In \cite{Haagerup}; it is proved that $W(\bbS)$ is the closed disc $D_{n }=\left\lbrace z\in \C ; \lvert z\lvert \leqslant \cos\frac{\pi}{n+1}\right\rbrace$ and $\omega_{2}(\bbS)=\cos\frac{\pi}{n+1}$
and more general
\begin{theorem}[\cite{Haagerup}]
\textit{ Let $T$ be an operator on $\mathcal{H}$ such that $T^{n}=0$ for some $n \geq 2$. One has:$$\omega_{2}(T)\leqslant \lVert T\lVert \cos\frac{\pi}{n+1}$$
and $\omega_{2}(T)=\lVert T\lVert \cos\frac{\pi}{n+1}$ when $T$ is unitarily equivalent to $\lVert T\lVert S_{n}$}.
\end{theorem}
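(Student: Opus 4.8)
The plan is to reduce the assertion to a single self-adjoint operator inequality and then to establish that inequality by exploiting a three-term recurrence satisfied by the numbers $\sin\frac{k\pi}{n+1}$, the nilpotency $T^{n}=0$, and the norm bound $\|T\|\leqslant 1$.

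First I would normalise $\|T\|=1$; the general case follows by homogeneity, since $\omega_{2}(\lambda T)=|\lambda|\,\omega_{2}(T)$. Because the numerical range is rotation invariant, $\omega_{2}(e^{i\theta}T)=\omega_{2}(T)$, and since $\mathrm{Re}\langle e^{i\theta}Tx,x\rangle=\langle\mathrm{Re}(e^{i\theta}T)x,x\rangle$ with $\mathrm{Re}(A)=\tfrac12(A+A^{\ast})$, one has $\omega_{2}(T)=\sup_{\theta}\lambda_{\max}\big(\mathrm{Re}(e^{i\theta}T)\big)$. As $e^{i\theta}T$ is again a contraction with $(e^{i\theta}T)^{n}=0$, it suffices to bound $\lambda_{\max}(\mathrm{Re}\,T)$, that is, to prove the operator inequality
$$ 2\cos\tfrac{\pi}{n+1}\,I-(T+T^{\ast})\ \geqslant\ 0. $$
This reduction is routine; the displayed positivity is the heart of the matter.

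To prove it I would introduce the numbers $s_{k}=\sin\frac{k\pi}{n+1}$ for $0\leqslant k\leqslant n+1$, which are strictly positive for $1\leqslant k\leqslant n$ and satisfy $s_{0}=s_{n+1}=0$ together with the three-term recurrence $s_{k-1}+s_{k+1}=2\cos\frac{\pi}{n+1}\,s_{k}$. The role of $T^{n}=0$ is that the polynomial $Q=\sum_{k=1}^{n}s_{k}\,T^{k-1}$ is a finite sum which is invertible, being $s_{1}I$ plus a nilpotent; a direct telescoping that uses the recurrence then yields the factorisation $\big(I-2\cos\frac{\pi}{n+1}\,T+T^{2}\big)Q=s_{1}I$, equivalently $(I-e^{i\pi/(n+1)}T)(I-e^{-i\pi/(n+1)}T)Q=s_{1}I$. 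I would then feed the contraction hypothesis into this identity, in the form $I-T^{\ast}T\geqslant 0$, to build a Positivstellensatz-type certificate expressing $2\cos\frac{\pi}{n+1}\,I-(T+T^{\ast})$ as a manifestly non-negative term plus a conjugate $Q^{\ast}(\,\cdots)Q$ of a non-negative operator. I expect this assembly — producing the certificate and verifying that all boundary contributions cancel by virtue of $s_{0}=s_{n+1}=0$ and $T^{n}=0$ — to be the main obstacle; it is precisely here that both hypotheses are genuinely needed, since dropping either makes the bound fail.

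Finally, for the equality statement I would compute directly. When $T=\|T\|\,\bbS$, the matrix $\mathrm{Re}(\bbS)$ is tridiagonal with zero diagonal and $\tfrac12$ on the two adjacent diagonals; its eigenvalues are $\cos\frac{k\pi}{n+1}$ for $k=1,\dots,n$, with eigenvectors whose $j$-th entries are $\sin\frac{jk\pi}{n+1}$, so $\lambda_{\max}(\mathrm{Re}\,\bbS)=\cos\frac{\pi}{n+1}$. Conjugating by the diagonal unitary $\mathrm{diag}(1,e^{i\theta},\dots,e^{(n-1)i\theta})$ gives $\mathrm{Re}(e^{i\theta}\bbS)=D\,\mathrm{Re}(\bbS)\,D^{\ast}$, so the supremum over $\theta$ is attained for every $\theta$ and $\omega_{2}(\bbS)=\cos\frac{\pi}{n+1}$. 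Hence $\omega_{2}(\|T\|\,\bbS)=\|T\|\cos\frac{\pi}{n+1}$, which meets the upper bound and establishes the equality case.
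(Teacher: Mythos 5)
Your reduction to the single operator inequality $2\cos\frac{\pi}{n+1}\,I-(T+T^{\ast})\geqslant 0$ is sound, the identity $\bigl(I-2\cos\frac{\pi}{n+1}T+T^{2}\bigr)Q=\sin\frac{\pi}{n+1}\,I$ with $Q=\sum_{k=1}^{n}\sin\frac{k\pi}{n+1}T^{k-1}$ does hold (the boundary terms of the telescoping vanish exactly because $s_{0}=s_{n+1}=0$ and $T^{n}=0$), and your treatment of the equality case for $S_{n}$ is complete and agrees with the eigenvalue computation $\lambda_{\nu}(S_{n}+S_{n}^{\ast})=2\cos\frac{\nu\pi}{n+1}$ carried out in the paper. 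But there is a genuine gap at exactly the point you flag as ``the main obstacle'': the positivity certificate is never produced, and it is the entire content of the theorem. Worse, the factorisation through $Q$ does not obviously lead to one. Since $Q$ is invertible, the inequality is equivalent to $Q^{\ast}\bigl(2\cos\frac{\pi}{n+1}I-T-T^{\ast}\bigr)Q\geqslant 0$, and a computation using the product identity $s_{j}s_{k+1}-s_{j-1}s_{k}=s_{1}s_{j+k}$ shows that the associated quadratic form at a vector $\xi$ equals $s_{1}\sum_{j,k=1}^{n}\sin\frac{(j+k)\pi}{n+1}\,\langle T^{k-1}\xi,T^{j-1}\xi\rangle$; the coefficients change sign for $j+k>n+1$, so this is not manifestly nonnegative, and arranging for $I-T^{\ast}T\geqslant 0$ to absorb the negative part is precisely the hard step you have postponed. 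As written, the argument establishes an algebraic identity and the (easy) equality case, but not the inequality.

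The paper closes exactly this gap by a different device, which you could adopt. In Theorem 2.3 and Corollary 2.4 one uses the map $V:\mathcal{H}\rightarrow\mathcal{D}_{T}\otimes\C^{n}$, $Vx=\sum_{t=1}^{n}D_{T}T^{t-1}x\otimes e_{t}$ with $D_{T}=(I-T^{\ast}T)^{1/2}$, which is an isometry because $\sum_{t=1}^{n}\lVert D_{T}T^{t-1}x\rVert^{2}=\lVert x\rVert^{2}-\lVert T^{n}x\rVert^{2}=\lVert x\rVert^{2}$, and which satisfies $VT=(I_{r}\otimes S_{n}^{\ast})V$, hence $T=V^{\ast}(I_{r}\otimes S_{n}^{\ast})V$. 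This dilation is itself the certificate you are looking for: it yields
\begin{equation*}
2\cos\tfrac{\pi}{n+1}\,I-T-T^{\ast}=V^{\ast}\Bigl(2\cos\tfrac{\pi}{n+1}\,I-I_{r}\otimes(S_{n}+S_{n}^{\ast})\Bigr)V\geqslant 0,
\end{equation*}
the middle operator being positive by the tridiagonal eigenvalue computation you also perform. Note how both hypotheses enter through $V$: the contraction hypothesis via $D_{T}$, the nilpotency via the fact that the sum defining $V$ stops at $t=n$. Either adopt this dilation as your certificate, or supply the sum-of-squares decomposition explicitly; without one of the two the proof is incomplete.
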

\hspace{0.5cm}In this mathematical note, we extend this result to the higher rank-$k$ numerical range of the shift. The notion of the the higher rank-k numerical range of $T \in\mathcal{B(H)}$ is introduced in \cite{Choi3} and it's denoted by:
\begin{eqnarray*}
 \Lambda_{k}(T)=\left\lbrace \lambda\in \C: PTP=\lambda P ~ \mbox{for some rank-$k$ orthogonal projection} ~P\right\rbrace, 
\end{eqnarray*}
The introduction of this notion was motivated by a problem in quantum error correction; see \cite{Choi4}. If $P$ is a rank-1 orthogonal projection then $P=x\otimes x$ for some $x\in\C^{n}$ and $PTP=<Tx,x>P$. Then when $k=1$, this concept is reduces to the classical numerical range $W(T)$, which is well known to be convex by the Toeplitz-Hausdorff theorem; for exemple see \cite{Li1} for a simple proof. In \cite{Choi1}, it's conjectured that $\Lambda_{k}(T)$ is convex, and reduced the convexity problem to the problem of showing that $0\in\Lambda_{k}(T') $ where $$T'=\left(
\begin{array}{cc}
I_{k} &   X     \\
 Y&   -I_{k} \end{array}
\right) $$
for arbitrary $X,Y\in \mathcal{M}_{k}$ (the algebra of $k\times k$ complex matrix). They further reduced this problem to the existance of a Hermitian matrix $H$ satisfying the matrix equation 
\begin{equation}
I_{k}+MH+H{M}^{\ast}-HRH=H
\end{equation}
for arbitrary $M\in \mathcal{M}_{k}$ and a positive definite $R\in \mathcal{M}_{k}$. In \cite{Woerdeman}, H. Woerdeman proved that equation (1.1) is equivalent to Ricatti equation:
\begin{equation}
HRH-H({M}^{\ast}-I_{k}/2)-(M-I_{k}/2)H-I_{k}=0_{k},
\end{equation}
and using the theory of Ricatti equations (see \cite{Lancaster}, Theorem 4), the equation (1.2) is solvable which prove the convexity of $\Lambda_{k}(T)$. In \cite{Choi3}, the authors showed that if dim$\mathcal{H}<\infty$ and $T \in \mathcal{B(H)}$ is a Hermitian matrix with eigenvalues $\lambda_{1}\leqslant\lambda_{2}\dots\leqslant\lambda_{n}$ then the rank-k nuemrical range $\Lambda_{k}(T) $ coincides with $\left[\lambda_{k},\lambda_{n+1-k} \right] $ which is a non-degenerate closed interval if $\lambda_{k}<\lambda_{n+1-k}$, a singleton set if $\lambda_{k}=\lambda_{n+1-k}$ and an empty set if $\lambda_{k}>\lambda_{n+1-k}$. In \cite{Lie4}, the authors proved that if dim$\mathcal{H}=n$
$$\Lambda_{k}(T)=\bigcap_{\theta\in[0,2\pi[ }\left\lbrace \mu\in\C:e^{i\theta}\mu+e^{-i\theta}\overline{\mu}\leqslant \lambda_{k}\left( e^{i\theta}T+e^{-i\theta}{T}^{\ast}\right) \right\rbrace,$$ for $1\leqslant k\leqslant n$, where $\lambda_{k}(H)$ denote the $k$th largest eigenvalue of the hermitian matrix $H\in \mathcal{M}_{n}$. This result establishes that if dim$\mathcal{H}=n$ and $T \in \mathcal{B(H)}$ is a normal matrix with eigenvalues $\lambda_{1},\dots,\lambda_{n}$ then $$\Lambda_{k}(T)=\bigcap_{1\leqslant j_{1}<\dots<j_{n-k+1}\leqslant n }\mbox{conv}\left\lbrace\lambda_{j_{1}},\dots,\lambda_{j_{n+1-k}} \right\rbrace .$$
\hspace{0.5cm}We close this section by the following properties wich are easly checked. The reader may consult \cite{Choi1},\cite{Choi2},\cite{Choi3},\cite{Choi4},\cite{Gau} and \cite{Li2}.
\begin{itemize}
\item[P1.]For any $a$ and $b \in\C, \Lambda_{k}(aT+bI)=a\Lambda_{k}(T)+b.$
\item[P2.]$\Lambda_{k}(T^{\ast})=\overline{\Lambda_{k}(T)}.$
\item[P3.]$\Lambda_{k}(T\oplus S)\supseteq\Lambda_{k}(T)\cup\Lambda_{k}(S).$
\item[P4.]For any unitary $U\in\mathcal{B(H)}, \Lambda_{k}(U^{\ast}TU)=\Lambda_{k}(T).$
\item[P5.]If $T_{0}$ is a compression of $T$ on a subspace $\mathcal{H}_{0}$ of $\mathcal{H}$ such that dim$\mathcal{H}_{0}\geq k$, then $\Lambda_{k}(T_{0})\subseteq\Lambda_{k}(T).$
\item[P6.]$W(T)\supseteq\Lambda_{2}(T)\supseteq\Lambda_{3}(T)\supseteq\dots~.$
\end{itemize}
Some results from \cite{Cassier} will be also developed in this context in a forthcoming paper.
\section{main theorem}
\hspace{0.5cm}In the following theorem we give the higher rank-$k$ numerical range of the n-dimensional shift on $\C^{n}$.
\begin{theorem}
\textit{ For any $n\geq2$ and $1\leqslant k\leqslant n,~ \Lambda_{k}(\bbS)$ coincides with the circular disc $\{z\in\C:\lvert z\lvert\leqslant\cos\dfrac{k\pi}{n+1}\}$ if $1\leqslant k\leqslant\left[\frac{n+1}{2} \right]$ and the empty set if $\left[\frac{n+1}{2} \right]<k\leqslant n$.}
\end{theorem}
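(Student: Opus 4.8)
The plan is to read off $\Lambda_k(\bbS)$ directly from the eigenvalue characterization of \cite{Lie4}, namely
$$\Lambda_{k}(\bbS)=\bigcap_{\theta\in[0,2\pi[ }\left\{ \mu\in\C:e^{i\theta}\mu+e^{-i\theta}\overline{\mu}\leqslant \lambda_{k}\!\left( e^{i\theta}\bbS+e^{-i\theta}{\bbS}^{\ast}\right) \right\},$$
where $\lambda_{k}(H)$ denotes the $k$-th largest eigenvalue of the Hermitian matrix $H$. The decisive observation I would exploit is the rotational symmetry of the shift. Setting $U_{\theta}=\mathrm{diag}(1,e^{i\theta},\dots,e^{(n-1)i\theta})$, a one-line entrywise computation gives $U_{\theta}\,\bbS\,U_{\theta}^{\ast}=e^{i\theta}\bbS$, and therefore
$$U_{\theta}\!\left( \bbS+{\bbS}^{\ast}\right)U_{\theta}^{\ast}=e^{i\theta}\bbS+e^{-i\theta}{\bbS}^{\ast}.$$
Hence $e^{i\theta}\bbS+e^{-i\theta}{\bbS}^{\ast}$ is unitarily equivalent to $\bbS+{\bbS}^{\ast}$ for every $\theta$, so its spectrum, and in particular $\lambda_{k}$, is independent of $\theta$.

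Next I would identify $\bbS+{\bbS}^{\ast}$ as the tridiagonal Toeplitz matrix with zero diagonal and unit off-diagonals, i.e. the adjacency matrix of the path on $n$ vertices, whose eigenvalues are the classical values $2\cos\frac{j\pi}{n+1}$ for $j=1,\dots,n$. Since $\cos$ is decreasing on $[0,\pi]$, these are already arranged in decreasing order, so that $\lambda_{k}(\bbS+{\bbS}^{\ast})=2\cos\frac{k\pi}{n+1}$.

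Feeding this back, and writing $c_{k}=\cos\frac{k\pi}{n+1}$, each constraint becomes the closed half-plane $\{\mu:\mathrm{Re}(e^{i\theta}\mu)\leqslant c_{k}\}$, whose boundary line is tangent in direction $\theta$ to the circle $|z|=c_{k}$. Thus
$$\Lambda_{k}(\bbS)=\bigcap_{\theta\in[0,2\pi[ }\left\{ \mu\in\C:\mathrm{Re}(e^{i\theta}\mu)\leqslant c_{k}\right\}.$$
When $c_{k}\geqslant0$ this is the intersection of all supporting half-planes of the disc of radius $c_{k}$, hence exactly that closed disc; when $c_{k}<0$, comparing the constraints at $\theta$ and at $\theta+\pi$ forces $\mathrm{Re}(e^{i\theta}\mu)\leqslant c_{k}<0$ and $\mathrm{Re}(e^{i\theta}\mu)\geqslant -c_{k}>0$ simultaneously, so the set is empty. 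Finally $c_{k}\geqslant0$ is equivalent to $\frac{k\pi}{n+1}\leqslant\frac{\pi}{2}$, that is $2k\leqslant n+1$, that is $k\leqslant\left[\frac{n+1}{2}\right]$, which is exactly the stated dichotomy (and $k=1$ recovers $W(\bbS)=D_{n}$ of \cite{Haagerup}).

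The individual verifications — the identity $U_{\theta}\bbS U_{\theta}^{\ast}=e^{i\theta}\bbS$ and the eigenvalue formula for the path matrix — are routine. The conceptual crux, and the only place where the particular structure of $\bbS$ is genuinely used, is the $\theta$-independence of $\lambda_{k}(e^{i\theta}\bbS+e^{-i\theta}{\bbS}^{\ast})$: this is precisely what collapses the intersection into a single rotated family of half-planes and forces $\Lambda_{k}(\bbS)$ to be circularly symmetric about the origin, thereby reducing the whole problem to locating one number, $\cos\frac{k\pi}{n+1}$.
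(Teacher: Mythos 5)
Your proof is correct and follows the same overall skeleton as the paper: both start from the Li--Sze characterization $\Lambda_{k}(\bbS)=\bigcap_{\theta}\{\mu:2\,\mathrm{Re}(e^{i\theta}\mu)\leqslant\lambda_{k}(e^{i\theta}\bbS+e^{-i\theta}\bbS^{\ast})\}$, reduce everything to showing $\lambda_{k}(e^{i\theta}\bbS+e^{-i\theta}\bbS^{\ast})=2\cos\frac{k\pi}{n+1}$ independently of $\theta$, and then identify the intersection of half-planes as a disc when $\cos\frac{k\pi}{n+1}\geqslant 0$ and as empty (via the $\theta$ versus $\theta+\pi$ constraints) otherwise. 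The one place you diverge is the eigenvalue computation: the paper carries $\theta$ along and computes the characteristic polynomial of the tridiagonal matrix by the three-term recurrence $\Delta_{n}=-\lambda\Delta_{n-1}-\Delta_{n-2}$, solving it via the substitution $\lambda=2\cos(\theta+t)$ to get $\Delta_{n}=(-1)^{n}\sin((n+1)(\theta+t))/\sin(\theta+t)$; the $\theta$-independence of the spectrum then falls out of the computation rather than being isolated as a structural fact. You instead observe up front that $U_{\theta}\bbS U_{\theta}^{\ast}=e^{i\theta}\bbS$ for $U_{\theta}=\mathrm{diag}(1,e^{i\theta},\dots,e^{(n-1)i\theta})$, so that $e^{i\theta}\bbS+e^{-i\theta}\bbS^{\ast}$ is unitarily equivalent to $\bbS+\bbS^{\ast}$ for all $\theta$, and then quote the classical spectrum $2\cos\frac{j\pi}{n+1}$ of the path adjacency matrix. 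This is a cleaner conceptual organization -- it makes the circular symmetry of $\Lambda_{k}(\bbS)$ visible before any computation -- at the price of citing the path spectrum as known, whereas the paper's recurrence derives it. Both arguments are complete and yield the same dichotomy $k\leqslant\left[\frac{n+1}{2}\right]$.
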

\begin{proof}
First observe that
\begin{eqnarray}
 \Lambda_{k}(\bbS)&=&\bigcap_{\theta\in[0,2\pi[ }\left\lbrace \mu\in\C:e^{i\theta}\mu+e^{-i\theta}\overline{\mu}\leqslant \lambda_{k}\left( e^{i\theta}\bbS+e^{-i\theta}\bbs\right) \right\rbrace\nonumber\\
&=&\bigcap_{\theta\in[0,2\pi[ }\left\lbrace \mu\in\C:Re(e^{i\theta}\mu)\leqslant \dfrac{1}{2}\lambda_{k}\left( e^{i\theta}\bbS+e^{-i\theta}\bbs\right) \right\rbrace\nonumber\\
&=&\bigcap_{\theta\in[0,2\pi[ }e^{i\theta}\left\lbrace z\in\C:Re(z)\leqslant \dfrac{1}{2}\lambda_{k}\left( e^{i\theta}\bbS+e^{-i\theta}\bbs\right) \right\rbrace
\end{eqnarray}
On the other hand, we have
$$e^{i\theta}\bbS+e^{-i\theta}\bbs=\left(
\begin{array}{cccccc}
0                & e^{-i\theta}          &  0          &\dots    &     0          &  0   \dots\\
e^{i\theta}      & 0                     &e^{-i\theta} & \dots    &     0           & 0    \dots               \\
0                & e^{i\theta}           &  0          & \dots   &0           &    0     \\
 \vdots               &  \vdots                    &   \vdots        & \ddots   &     \vdots    &   \vdots    \\
0           &0                  &0        & \dots &0      &   e^{-i\theta}         \\
  0               &     0                  &      0       &   \dots    &  e^{i\theta} &0
\end{array}
\right).
$$
Note that $e^{i\theta}\bbS+e^{-i\theta}\bbs$ is a Toeplitz matrix associated to the Toeplitz form $$f_{\theta}(t)=2\cos(\theta+t).$$
The eigenvalues satisfy the caracteristic equation
\begin{eqnarray}
 \Delta_{n}(\lambda)&=& Det \left( e^{i\theta}\bbS+e^{-i\theta}\bbs\right) \nonumber\\
&=&\left\lvert
\begin{array}{cccccc}
-\lambda                & e^{-i\theta}          &  0          &\dots    &     0          &  0   \dots\\
e^{i\theta}      & -\lambda                     &e^{-i\theta} & \dots    &     0           & 0    \dots               \\
0                & e^{i\theta}           &  -\lambda           & \dots   &0           &    0     \\
 \vdots               &  \vdots                    &   \vdots        & \ddots   &     \vdots    &   \vdots    \\
0           &0                  &0        & \dots &-\lambda       &   e^{-i\theta}         \\
  0               &     0                 &      0       &   \dots    &  e^{i\theta} &-\lambda 
\end{array}
\right\lvert\nonumber
\end{eqnarray}

Expanding this determinant, we obtain the recurrence relation
$$\Delta_{n}(\lambda)=-\lambda\Delta_{n-1}-\Delta_{n-2},~~~n=2,3,4,\dots,$$
This recurrence relation holds also for $n=1$ provided we put $\Delta_{0}=1$ and $\Delta_{-1}=0$. In order to find an explicit representation of $\Delta_{n}(\lambda)$, we write convenently $$ \lambda=2\cos(\theta+t)=f_{\theta}(t)$$
and form the caracteristic equation $$\rho^{2}=-\lambda\rho-1=-2\rho\cos(\theta+t)-1 $$ 
with the roots $-e^{i(\theta+t)}$ and $-e^{-i(\theta+t)}$ so that $$\Delta_{n}(2\cos(\theta+t))=(-1)^{n}(Ae^{in(\theta+t)}+Be^{-in(\theta+t)})$$ where the constants $A$ and $B$ can be determined from the cases $n=-1$ and $n=0$. Thus $$\Delta_{n}(2\cos(\theta+t))=(-1)^{n}\frac{\sin((n+1)(\theta+t))}{\sin(\theta+t).}$$
This yields the eigenvalues $$\lambda_{\nu}=2\cos(\dfrac{\nu\pi}{n+1}),~~~ \nu=1,2,\dots n.$$
This implies of course that 
$$\Lambda_{k}(\bbS)=\bigcap_{\theta\in[0,2\pi[ }e^{i\theta}\left\lbrace z\in\C:Re(z)\leqslant \cos(\dfrac{k\pi}{n+1}) \right\rbrace$$
Thus $\Lambda_{k}(\bbS)$ is the intersection of closed half planes. We note that $\cos(\dfrac{k\pi}{n+1})$ is positive if and only if $k\leqslant\left[\frac{n+1}{2} \right]$. 

\textbf{Case 1.}If $k\leqslant\left[\frac{n+1}{2} \right]$ In this case $\Lambda_{k}(\bbS)$ is circular disc $\{z\in\C:\lvert z\lvert\leqslant\cos\dfrac{k\pi}{n+1}\}$. 

\textbf{Case 2.} If  $k>\left[\frac{n+1}{2} \right]$, then 
\begin{eqnarray*}
\Lambda_{k}(\bbS)&\subseteq& \left\lbrace z\in\C:Re(z)\leqslant \cos(\dfrac{k\pi}{n+1}) \right\rbrace \bigcap e^{i\pi}\left\lbrace z\in\C:Re(z)\leqslant \cos(\dfrac{k\pi}{n+1}) \right\rbrace\\
&=&\emptyset.
\end{eqnarray*}

This completes the proof.
\end{proof}
\hspace{0.5cm}On the sequel of this paper, let denote by
$$\rho(k,r) = \left\{
    \begin{array}{ll}
        k/r & \mbox{if} ~k/r~ \mbox{is is integer} \\
        \left[ k/r\right] +1 & \mbox{unless}
    \end{array}
\right.
$$where $k$ and $r$ are arbitrary numbers.
\begin{lemma}
 \textit{For a fixed $n\geq1$ and $r\geq1$, let denote by $\lambda_{1}>\dots>\lambda_{n}$; $n$ real numbers and ${({\lambda'}_{p})}_{1\leqslant p\leqslant nr}$ a finite sequence defined by: $${\lambda'}_{1}=\dots={\lambda'}_{r}=\lambda_{1},\dots,{\lambda'}_{(n-1)r+1}=\dots={\lambda'}_{nr}=\lambda_{n}.$$
Then for each $1\leqslant k\leqslant nr$, the $k$th largest term of ${({\lambda'}_{t})}_{1\leqslant t\leqslant nr}$ is $\lambda_{\rho(k,r)}$.}
\end{lemma}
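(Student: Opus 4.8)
The plan is to exploit the fact that, by construction, the finite sequence $(\lambda'_t)_{1\leqslant t\leqslant nr}$ is already arranged in non-increasing order. Indeed, since $\lambda_1>\lambda_2>\dots>\lambda_n$, reading the $\lambda'_t$ from $t=1$ to $t=nr$ gives $r$ copies of the largest value $\lambda_1$, followed by $r$ copies of $\lambda_2$, and so on down to $r$ copies of the smallest value $\lambda_n$. Consequently the $k$th largest term of the sequence is simply the term occupying position $k$, namely $\lambda'_k$, and the whole statement reduces to identifying the index $i$ for which $\lambda'_k=\lambda_i$.

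First I would describe the block structure explicitly: position $t$ carries the value $\lambda_j$ precisely when $(j-1)r+1\leqslant t\leqslant jr$, that is, when $(j-1)r<t\leqslant jr$. Applying this with $t=k$, the value $\lambda'_k$ equals $\lambda_j$ for the unique integer $j$ satisfying $(j-1)r<k\leqslant jr$, which is equivalent to $j-1<k/r\leqslant j$. Hence $j=\lceil k/r\rceil$, the least integer not smaller than $k/r$.

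It then remains to verify that $\lceil k/r\rceil$ coincides with $\rho(k,r)$, and this is exactly the case distinction in the definition of $\rho$: if $k/r$ is an integer then $\lceil k/r\rceil=k/r$, whereas if $k/r$ is not an integer then $\lceil k/r\rceil=[k/r]+1$. Combining these steps yields $\lambda'_k=\lambda_{\rho(k,r)}$, as claimed.

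The argument is essentially bookkeeping, so I do not anticipate a genuine obstacle; the only point requiring a little care is the boundary case $r\mid k$, where one must make sure that the inequalities $(j-1)r<k\leqslant jr$ place $k$ in block $j=k/r$ rather than in block $k/r+1$ — this is precisely why the definition of $\rho$ treats the divisible case separately. As an alternative that avoids the explicit ceiling computation, one could run a short induction on $k$: the value stays constant as $k$ traverses a block of length $r$ and drops to the next $\lambda$ each time $k$ crosses a multiple of $r$, which reproduces the same formula.
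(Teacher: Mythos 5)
Your proof is correct, but it is organized differently from the paper's. You argue directly: the sequence $(\lambda'_t)$ is already listed in non-increasing order, so the $k$th largest term is just $\lambda'_k$, and locating $k$ in its block via $(j-1)r<k\leqslant jr$ gives $j=\lceil k/r\rceil$, which you then check equals $\rho(k,r)$ in both branches of its definition. The paper instead proceeds by induction on $k$, splitting into the case where $r$ divides $k-1$ (so the $k$th largest term starts a new block, and the index advances from $\frac{k-1}{r}$ to $[\frac{k}{r}]+1$) and the case where it does not (so the term stays in the same block and $\rho(k-1,r)=\rho(k,r)$) --- exactly the "crossing a multiple of $r$" dynamic you sketch as an alternative at the end. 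Your direct version is arguably cleaner, since it makes explicit the one fact the induction leaves implicit (that the concatenated sequence is already sorted) and packages the case distinction into the single identity $\lceil k/r\rceil=\rho(k,r)$; the paper's induction trades that for a step-by-step verification that never needs to name the ceiling function. Both arguments are complete, and your handling of the boundary case $r\mid k$ is exactly the point where care is needed.
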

\begin{proof}
The claim is obvious in the case where $r=1$. We may assume $r\geq 2$. We prove the result by induction on $k$.
If $k=1$, then the largest term is $\lambda_{1}=\lambda_{\rho(1,r)}$. So the result hold for $k=1$. Assume that $k>1$, and the reslut is valid for the $m$th largest term of ${({\lambda'}_{t})}_{1\leqslant t\leqslant nr}$ whenever $m<k$.

\textbf{Case 1.} Suppose that $\rho(k-1,r)=\frac{k-1}{r}$, then there exists $1\leqslant p\leqslant n-1$ such that $k-1=pr$. By induction assumption, we have $\lambda_{\rho(k-1,r)}={\lambda'}_{pr}=\lambda_{p}$, which implies that the $k$th largest term of ${({\lambda'}_{t})}_{1\leqslant t\leqslant nr}$ is $${\lambda'}_{pr+1}=\lambda_{p+1}=\lambda_{\frac{k-1}{r}+1}=\lambda_{[\frac{k}{r}]+1}=\lambda_{\rho(k,r)}.$$

\textbf{Case 2.} Suppose that $\rho(k-1,r)=[\frac{k-1}{r}]+1$, then there exist $1\leqslant q\leqslant n-1$ and $1\leqslant s\leqslant r-1$ such that $k-1=qr+s$. First, note that $\rho(k-1,r)=\rho(k,r)$. On the other hand, by induction assumption, we have $\lambda_{\rho(k-1,r)}={\lambda'}_{qr+s}=\lambda_{q+1}$. Consequently the $k$th largest term of ${({\lambda'}_{t})}_{1\leqslant t\leqslant nr}$ is $${\lambda'}_{qr+s+1}={\lambda}_{q+1}=\lambda_{\rho(k-1,r)}=\lambda_{\rho(k,r)}.$$ The proof is now complete.
\end{proof}

\hspace{0.5cm}Let $D_{T}=(I_{N}-T^{\ast}T)^{1/2}$ be the defect operator of $T$ and $\mathcal{D}_{T}$ the closed range of $D_{T}$. Let denote by $r=\mbox{dim} \mathcal{D}_{T}$.
\begin{theorem}
\textit{Consider $T\in \mathcal{B(H)}$ such that $\lVert T\lVert\leqslant 1$ and $T^{n}=0$. Then $\Lambda_{k}(T)$ is contained in the circular disc $\{z\in \C:\lvert z\lvert\leqslant\cos(\frac{\rho(k,r)\pi}{n+1})\}$ if $1\leqslant\rho(k,r)\leqslant[\frac{n+1}{2}]$ and empty if $\rho(k,r)>[\frac{n+1}{2}]$.}
\end{theorem}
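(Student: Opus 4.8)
The plan is to realise $T$ as a compression of the multiplicity-$r$ truncated shift $\bbS\otimes I_{r}$ and then to rerun the argument of the main theorem almost verbatim, with the preceding Lemma absorbing the defect index. Since both $\Lambda_{k}(\cdot)$ and the $k$th largest eigenvalue are unitary invariants (P4), it suffices to work with a convenient model of $T$.

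The decisive step is the dilation, and this is where I would begin. As $\lVert T\rVert\leqslant1$ and $T^{n}=0$, one has $T^{\ast n}=0$ as well, so $T$ is a completely non-unitary contraction of class $C_{\cdot0}$; hence its minimal isometric dilation $V$, on a space $\mathcal{K}\supseteq\mathcal{H}$, is a pure unilateral shift of multiplicity $\dim\mathcal{D}_{T}=r$, with $\mathcal{H}$ coinvariant and $T^{\ast j}=V^{\ast j}|_{\mathcal{H}}$. The relation $T^{\ast n}=0$ then forces $\mathcal{H}\subseteq\ker V^{\ast n}=\mathcal{K}\ominus V^{n}\mathcal{K}$, a subspace of dimension $nr$ on which the compression of $V$ is unitarily equivalent to the block shift $\bbS\otimes I_{r}$. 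Because $\mathcal{H}$ sits inside this space, $T$ becomes unitarily equivalent to a compression $T=P(\bbS\otimes I_{r})P|_{\mathcal{H}}$.

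It then remains to transport the eigenvalue computation of the main theorem. For each $\theta$,
$$e^{i\theta}(\bbS\otimes I_{r})+e^{-i\theta}(\bbs\otimes I_{r})=\bigl(e^{i\theta}\bbS+e^{-i\theta}\bbs\bigr)\otimes I_{r},$$
whose eigenvalues are those of $e^{i\theta}\bbS+e^{-i\theta}\bbs$, namely the strictly decreasing numbers $2\cos\frac{\nu\pi}{n+1}$ ($\nu=1,\dots,n$) already found in the proof of the main theorem, each repeated $r$ times. By the preceding Lemma its $k$th largest eigenvalue is exactly $2\cos\frac{\rho(k,r)\pi}{n+1}$. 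Since $e^{i\theta}T+e^{-i\theta}T^{\ast}=P\bigl((e^{i\theta}\bbS+e^{-i\theta}\bbs)\otimes I_{r}\bigr)P|_{\mathcal{H}}$ is a compression of that Hermitian matrix, Cauchy interlacing gives
$$\lambda_{k}\bigl(e^{i\theta}T+e^{-i\theta}T^{\ast}\bigr)\leqslant2\cos\frac{\rho(k,r)\pi}{n+1}\qquad(\theta\in[0,2\pi[).$$
Feeding this bound into the formula of \cite{Lie4} exactly as in the main theorem yields
$$\Lambda_{k}(T)\subseteq\bigcap_{\theta\in[0,2\pi[}e^{i\theta}\Bigl\{z\in\C:Re(z)\leqslant\cos\tfrac{\rho(k,r)\pi}{n+1}\Bigr\},$$
and the dichotomy according to the sign of $\cos\frac{\rho(k,r)\pi}{n+1}$ — that is, whether $\rho(k,r)\leqslant\bigl[\frac{n+1}{2}\bigr]$ or $\rho(k,r)>\bigl[\frac{n+1}{2}\bigr]$ — closes the argument just as in Cases 1 and 2 there.

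The main obstacle is the first step: correctly pinning the multiplicity of the minimal isometric dilation to $r=\dim\mathcal{D}_{T}$ and verifying that nilpotency of order $n$ collapses the a priori infinite-dimensional shift onto the finite block shift $\bbS\otimes I_{r}$ by confining $\mathcal{H}$ to $\mathcal{K}\ominus V^{n}\mathcal{K}$. Once this compression is in hand, the remainder is only the monotonicity of eigenvalues under compression together with the bookkeeping supplied by the preceding Lemma.
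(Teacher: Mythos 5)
Your argument is correct and, at bottom, runs along the same lines as the paper's: realize $T$ as a compression of an $r$-fold ampliation of the $n$-dimensional shift, identify the $k$th largest eigenvalue of its Hermitian parts via the preceding Lemma, and feed the result into the Li--Sze half-plane formula. The one place where you genuinely diverge is the dilation step. The paper does not invoke the Sz.-Nagy--Foias minimal isometric dilation at all: it simply writes down the explicit isometry $V:\mathcal{H}\to\mathcal{D}_{T}\otimes\C^{n}$, $Vx=\sum_{t=1}^{n}D_{T}T^{t-1}x\otimes e_{t}$ (an isometry because $\sum_{t}\|D_{T}T^{t-1}x\|^{2}=\|x\|^{2}-\|T^{n}x\|^{2}=\|x\|^{2}$), checks the intertwining $VT=(I_{r}\otimes \bbs)V$ in one line, and concludes $T=V^{\ast}(I_{r}\otimes \bbs)V$, so that (P5) gives $\Lambda_{k}(T)\subseteq\Lambda_{k}(I_{r}\otimes \bbs)$ directly; your Cauchy-interlacing step is then subsumed in (P5). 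Your abstract route works, but it carries a small debt that the explicit one avoids: the wandering subspace of the minimal isometric dilation is canonically $\mathcal{D}_{T^{\ast}}$, not $\mathcal{D}_{T}$, so pinning the multiplicity at $r=\dim\mathcal{D}_{T}$ requires the extra remark that $\dim\mathcal{D}_{T}=\dim\mathcal{D}_{T^{\ast}}$ for a nilpotent contraction (true in finite dimensions by rank counting, and both dimensions are infinite when $\dim\mathcal{H}=\infty$, but it should be said). With that patched the two proofs are interchangeable; the explicit isometry is shorter and self-contained, while your version makes clearer why the finite block-shift model is forced.
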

\begin{proof} 
If $T$ is a contaction with $T^{n}=0$, then $T$ can be viewed as a compression of $I_{r}\otimes \bbs$ acting on the Hilbert space $\mathcal{D}_{T}\otimes{\C}^{n}$. Consider the isometry $\mathcal{H}\rightarrow \mathcal{D}_{T}\otimes{\C}^{n}$, $$V(x)={\sum}_{t=1}^{n}D_{T}T^{t-1}x\otimes e_{t}$$
where ${\left\lbrace e_{l}\right\rbrace}_{l=1}^{n}$ is the canonical basis of ${\C}^{n}$. Note that $$VTx={\sum}_{t=1}^{n}D_{T}T^{t}x\otimes e_{t}={\sum}_{t=1}^{n-1}D_{T}T^{t}x\otimes e_{t}=(I_{r}\otimes \bbs)Vx.$$ It follows that $$T=V^{\ast}(I_{r}\otimes \bbs)V$$
and from (P.5) 
\begin{equation}
 \Lambda_{k}(T)=\Lambda_{k}(V^{\ast}(I_{r}\otimes \bbs)V)\subseteq\Lambda_{k}(I_{r}\otimes \bbs),~~\mbox{for any}   ~1\leqslant k\leqslant nr.
\end{equation}
Now,\\ 
$ \Lambda_{k}(I_{r}\otimes \bbs)$
\begin{eqnarray*}
 &=&\bigcap_{\theta\in[0,2\pi[ }\left\lbrace \mu\in\C:e^{i\theta}\mu+e^{-i\theta}\overline{\mu}\leqslant \lambda_{k}\left( e^{i\theta}(I_{r}\otimes \bbs)+e^{-i\theta}(I_{r}\otimes \bbs)^{\ast}\right) \right\rbrace\\
 &=&\bigcap_{\theta\in[0,2\pi[ }\left\lbrace \mu\in\C:e^{i\theta}\mu+e^{-i\theta}\overline{\mu}\leqslant \lambda_{k}\left( e^{i\theta}(I_{r}\otimes \bbs)+e^{-i\theta}(I_{r}\otimes {\bbS})\right) \right\rbrace\\
 &=&\bigcap_{\theta\in[0,2\pi[ }\left\lbrace \mu\in\C:e^{i\theta}\mu+e^{-i\theta}\overline{\mu}\leqslant \lambda_{k}\left(I_{r}\otimes( e^{i\theta}\bbS+e^{-i\theta}\bbs)\right) \right\rbrace\\
 &=&\bigcap_{\theta\in[0,2\pi[ }\left\lbrace \mu\in\C:e^{i\theta}\mu+e^{-i\theta}\overline{\mu}\leqslant \lambda_{k}\left(\oplus_{i}^{r}( e^{i\theta}\bbS+e^{-i\theta}\bbs)\right) \right\rbrace\\
 &=&\bigcap_{\theta\in[0,2\pi[ }e^{i\theta}\left\lbrace z\in\C:Re(z)\leqslant \cos(\frac{\rho(k,r)\pi}{n+1})\ \right\rbrace\\
\end{eqnarray*}

where the last equality is due to the lemma (2.2) and theorem (2.1). 
Thus $$\Lambda_{k}(I_{r}\otimes \bbs)= \left\{
    \begin{array}{ll}
        \overline{\textit{D}(0,\cos(\frac{\rho(k,r)\pi}{n+1}))} & \mbox{if }~ 1\leqslant\rho(k,r)\leqslant[\frac{n+1}{2}] \\
        \emptyset & \mbox{if} ~~[\frac{n+1}{2}]<\rho(k,r)\leqslant n
    \end{array}
\right.$$
Therefore,\\ if $1\leqslant k\leqslant nr$, (2.2) implies that $\Lambda_{k}(T)\subseteq \overline{\textit{D}(0,\cos(\frac{\rho(k,r)\pi}{n+1}))}$ if $1\leqslant\rho(k,r)\leqslant[\frac{n+1}{2}]$ and empty if $[\frac{n+1}{2}]<\rho(k,r)\leqslant n$. Finally, if $k>nr$, $\Lambda_{k}(T)=\emptyset$ from (P6).
\end{proof}
\begin{corollary}[U. Haagerup, P. de la Harpe,\cite{Haagerup}]
 \textit{Consider $T \in \mathcal{B(H)}$ such that $\lVert T\lVert\leqslant 1$ and $T^{n}=0$. Then we have $\omega_{2}(T)\leqslant\cos(\frac{\pi}{n+1}).$}
\end{corollary}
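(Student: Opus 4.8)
The plan is to derive this estimate as the special case $k=1$ of Theorem~2.3, so essentially no new argument is required. First I would recall that every rank-$1$ orthogonal projection has the form $P=x\otimes x$ with $\lVert x\lVert=1$, and that in this case $PTP=\langle Tx,x\rangle P$; consequently $\Lambda_{1}(T)$ coincides with the classical numerical range $W(T)$. By the very definition of the numerical radius, $\omega_{2}(T)=\sup\{\lvert z\lvert:z\in W(T)\}=\sup\{\lvert z\lvert:z\in\Lambda_{1}(T)\}$, so it suffices to bound the modulus of every point of $\Lambda_{1}(T)$.

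Next I would evaluate $\rho(1,r)$, where $r=\dim\mathcal{D}_{T}$. Since $r\geq1$, either $r=1$, in which case $1/r=1$ is an integer and $\rho(1,r)=1$, or $r\geq2$, in which case $1/r$ is not an integer and $\rho(1,r)=[1/r]+1=0+1=1$. Thus $\rho(1,r)=1$ in all cases, independently of the defect dimension. It then remains to check the hypothesis $1\leqslant\rho(1,r)\leqslant[\frac{n+1}{2}]$ of Theorem~2.3: the left inequality is immediate, while $1\leqslant[\frac{n+1}{2}]$ holds for every $n\geq2$ (and the degenerate case $n=1$ forces $T=0$, for which $\omega_{2}(T)=0=\cos\frac{\pi}{2}$ trivially).

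With these two observations in hand, Theorem~2.3 applied with $k=1$ gives the inclusion $\Lambda_{1}(T)\subseteq\{z\in\C:\lvert z\lvert\leqslant\cos(\frac{\rho(1,r)\pi}{n+1})\}=\{z\in\C:\lvert z\lvert\leqslant\cos\frac{\pi}{n+1}\}$. Since $\Lambda_{1}(T)=W(T)$, taking the supremum of moduli over this disc yields $\omega_{2}(T)\leqslant\cos\frac{\pi}{n+1}$, which is exactly the asserted bound. The only point that demands any care is the computation $\rho(1,r)=1$ together with the verification that this value lies in the admissible range $[1,[\frac{n+1}{2}]]$; beyond this bookkeeping there is no genuine obstacle, as the whole substantive content has already been carried out in Theorem~2.3.
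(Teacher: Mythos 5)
Your proposal is correct and follows essentially the same route as the paper: the paper also obtains the corollary by specializing the dilation $T=V^{\ast}(I_{r}\otimes S_{n})V$ and the inclusion $\Lambda_{1}(T)\subseteq\Lambda_{1}(I_{r}\otimes S_{n})=\overline{D(0,\cos\frac{\pi}{n+1})}$ to the rank-one case, i.e.\ to $k=1$ in Theorem~2.3. Your explicit check that $\rho(1,r)=1$ for every $r\geq 1$ is a small piece of bookkeeping the paper leaves implicit, but it is exactly the right verification.
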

\begin{proof}
  $T=V^{\ast}(I_{r}\otimes \bbs)V$ where $V:H\rightarrow \mathcal{D}_{T}\otimes{\C}^{n}$,$$V(x)={\sum}_{t=1}^{n}D_{T}T^{t-1}x\otimes e_{t}.$$ Now $$W(T)=\Lambda_{1}(T)=\Lambda_{1}(V^{\ast}(I_{r}\otimes \bbS)V)\subseteq\Lambda_{1}(I_{r}\otimes \bbS)=\overline{\textit{D}(0,\cos\frac{\pi}{n+1})}.$$
\end{proof}

\underline{Acknowledgements:} The author would like to express his gratitude to Gilles Cassier for his help and his good advices.

\end{document}